\newtheorem{lemma}{Lemma}[section]
\newtheorem{theorem}{Theorem}[section]
\newtheorem{definition}{Definition}[section]
\newcommand{\C}{\mathbb{C}}
\newcommand{\delb}{\overline{\partial}}
\newcommand{\del}{\partial}
\newcommand{\At}{A_{\theta}}
\newcommand{\Ats}{A_{\theta}^{\infty}}
\newcommand{\SR}{\mathcal S(\mathbb R)}
\begin{document}

\title{A Riemann-Roch theorem for the noncommutative two torus}
\author {  Masoud Khalkhali$^1$,  Ali Moatadelro$^2$\\
\emph{Department of Mathematics, University of Western Ontario, Canada}}
\date{}
\maketitle{}

\begin{abstract} We prove the analogue of the Riemann-Roch formula  for  the noncommutative two torus $ A_{\theta} = C(\mathbb{T}_{\theta}^2)$ equipped
with an arbitrary translation invariant complex structure and a Weyl factor represented by a positive element $k\in C^{\infty}(\mathbb{T}_{\theta}^2)$.
 We consider a topologically trivial line bundle equipped with a general holomorphic structure and the corresponding twisted Dolbeault Laplacians.
We define an spectral triple ($A_{\theta}, \mathcal{H}, D)$  that encodes the twisted Dolbeault complex of  $ A_{\theta}$ and whose index gives the left
 hand side of the  Riemann-Roch formula.
 Using Connes' pseudodifferential calculus  and heat equation techniques, we  explicitly compute  the $b_2$ terms of the asymptotic expansion of
$\text{Tr} (e^{-tD^2})$. We find that the curvature term  on the right hand side of  the Riemann-Roch formula  coincides with the scalar curvature
of the noncommutative torus recently defined and computed in \cite{CM1} and \cite{FK2}.
\end{abstract}

\begin{center}\tableofcontents
\end{center}

\section{Introduction}
Let $M$ be a  closed,  connected, and oriented surface equipped with a Riemannian metric and hence with a canonically defined complex structure. The complex structure is invariant under   conformal perturbations   of the metric.   Let $E$  be a holomorphic  line   bundle on $M$ equipped with a hermitian metric,  and let $ \delb_E:  C^{\infty}(M, E) \to \Omega^{(0, 1)} (M, E)$  denote the unique holomorphic connection on $E$ compatible with its hermitian metric. It is an elliptic operator and the Riemann-Roch formula computes  its index as follows.  Let $\Delta_0 = \delb_E^* \delb_E$
and   $\Delta_1= \delb_E \delb_E^*$ denote the corresponding Laplacians acting on smooth sections of $E$ and smooth $(0, 1)$-forms with coefficients in $E$, respectively. We have asymptotic expansion for heat traces
$$\text{Tr} \, e^{-t \Delta} \sim t^{-1}\sum_{n=0}^{\infty}(\int_M b_{2n} (x, \Delta) \,dV)\,t^{n} \quad \quad (t \to 0^+),$$
where $\Delta =\Delta_i, i=0,1,$  and $dV$ is the volume form of $M$.  Then we have
\begin{align}\label{index}
\text{index}\, (\delb_E)= \int_M (b_2 (x, \Delta_0) -b_2 (x, \Delta_1))\, dV =  \frac{1}{4 \pi}\int_M R+\frac{1}{2 \pi i} \int_M R_E,
\end{align}
where $R= K dV$ is the Gaussian curvature form of  $M$,   and $R_E$  is the curvature 2-form of $E$ \cite{G}.

In this paper we prove the analogue of the Riemann-Roch formula  for  the noncommutative two torus $ A_{\theta} = C(\mathbb{T}_{\theta}^2)$ equipped
with an arbitrary translation invariant complex structure and Weyl factor.  For the line bundle we consider a topologically trivial line bundle equipped
with a general, non-trivial,  holomorphic structure. Extending the spectral triple of \cite{CT}, we define an spectral triple ($A_{\theta}, \mathcal{H}, D)$  that encodes
the twisted Dolbeault complex of  $ A_{\theta}$ and whose index gives the left hand side of the  Riemann-Roch formula.
 Using Connes' pseudodifferential calculus \cite{CT} and heat equation techniques, we  explicitly compute  the $b_2$ terms of the asymptotic expansion
of $\text{Tr} (e^{-tD^2})$. We find that the $R$-term on the right hand side of  the Riemann-Roch formula  coincides with the scalar curvature of the
noncommutative torus recently defined and computed in \cite{CM1} and \cite{FK2}. We also show that the  topological term   $R_E$, the Chern form,
vanishes.
For the trivial holomorphic structure, we recover the Gauss-Bonnet theorem of   \cite{CT} and  \cite{FK}. In the last section we consider the nontrivial projective module $\mathcal S(\mathbb R)$ equipped with a holomorphic structure and by a variational argument verify the statement of the Riemann-Roch theorem for the twisted Dolbeault operator.

 Following the pioneering work of Connes and Tretkoff  on the Gauss-Bonnet theorem  for the noncommutative two torus \cite{CT},
and its extension and refinement in \cite{FK},  the question of defining and computing the scalar curvature for the noncommutative two
torus was  eventually settled in the paper of Connes and Moscovici in \cite{CM1}, and,  independently,
 by Fathizadeh and Khalkhali in \cite{FK2}. So the question of a Riemann-Roch formula for $ A_{\theta}$ posed
itself in a natural way at this stage. M. K.   would like to thank Farzad Fathizadeh for continued collaboration and many informative discussions.

\section{Conformal structures on the irrational rotation algebra.}\label{conf}

For an irrational number $\theta$, the $C^*$-algebra
 $A_{\theta}$ is, by definition, the universal unital $C^*$-algebra  generated by two unitaries $U, V$ satisfying
 $$VU=e^{2 \pi i \theta} UV.$$
There is a continuous action of $\mathbb{T}^2$, $\mathbb{T}$, on $A_{\theta}$ given by 
\[\alpha_s(U^mV^n)=e^{is.(m,n)}U^mV^n,\]
and the space of smooth elements of $\At$ under this action will be denoted by $A_{\theta}^{\infty}$. This algebra is also can be described as
\[
A_{\theta}^{\infty}=\big \{\sum_{m,n\in \mathbb{Z}}a_{m,n}U^mV^n; \quad  (1+|m|^k+|n|^q)|a_{m,n}| < \infty,
\forall k,q \in \mathbb{Z}  \big \}.
\]
There is a unique normalized trace   $\tau_0$  on $A_{\theta}$ that on smooth elements is given by
\[\tau_0\,(\sum_{m,n\in \mathbb{Z}}a_{m,n}U^mV^n)=a_{0,0}.\]

There are two derivations denoted by $\delta_1, \, \delta_2: A_{\theta}^{\infty} \to A_{\theta}^{\infty}$ induced by the action of $\mathbb{T}^2$ on $A_{\theta}$. On the generators they are defined by
\[\delta_1(U)=U, \quad \delta_1(V)=0, \quad  \delta_2(U)=0, \quad  \delta_2(V)=V.\]
These derivations anti-commute with the $*$-operator of $\At$, i.e. one has $\delta_j(a^*)= -\delta_j(a)^* $ for $j=1, 2$ and $a\in A_{\theta}^{\infty}$ and also they are invariant under the trace.
 \[\tau_0\, \circ \delta_j =0, \quad\text{for}\quad j=1, 2.\]
This yields
\[ \tau_0\,(a\delta_j(b)) = -\tau_0\,(\delta_j(a)b), \qquad \forall a,b \in A_{\theta}^{\infty}. \]

There exists an inner product on $\At$ given by
\[ \langle a, b \rangle = \tau_0\,(b^*a), \qquad a,b \in A_{\theta}. \nonumber \]
The Hilbert space completion of $\At$ under this inner product will be denoted by $\mathcal{H}_0$. The derivations $\delta_1, \delta_2$, as unbounded
operators on $\mathcal{H}_0$,  are formally selfadjoint and have unique extensions to selfadjoint operators.

For any complex number $\tau$ in the upper half plane, there exists a complex structure on the noncommutative two torus given by
\[ \partial = \delta_1 + \bar \tau \delta_2, \qquad \partial^*=  \delta_1 + \tau \delta_2. \]
The associated positive Hochschild  two cocycle on  $A_{\theta}^{\infty}$ is given by (\emph{cf.}
\cite{CT})
\[ \psi (a, b, c)=- \tau_0\, (a\partial b \partial^* c).\]

The space of $(1,0)$-forms in this case is defined by the completion of the space of finite sums $\sum a \partial b$, $a,b
\in A_{\theta}^{\infty}$
with respect to the inner product defined by the above positive cocycle. Note that $\partial$ is an unbounded operator on  $\mathcal{H}_0$ and $\partial^*$ is
 its formal adjoint. 

One can change the metric $\tau_0$,  inside the conformal class of the metric \cite{CT}, see also \cite{CohC}, by choosing a smooth
selfadjoint element $h=h^* \in A_{\theta}^{\infty}$, and defining a linear functional
$\varphi$ on $A_{\theta}$ by
\[\varphi(a)= \tau_0\,(ae^{-h}), \qquad a \in A_{\theta}.\]
The map $\varphi$ is a positive linear functional which is a twisted trace and satisfies the KMS condition at $\beta = 1$ for the
1-parameter group $\{\sigma_t \}$, $t \in \mathbb{R}$ of inner automorphisms 
\[
\sigma_t(x)=e^{ith}xe^{-ith}.
\]
We have $\sigma_t= \Delta^{-it}$ where the modular operator for $\varphi$ is 
(\emph{cf.} \cite{CT})
\[\Delta(x)=e^{-h}xe^{h}.\]
The 1-parameter group of automorphisms $\sigma_t$ is generated by the derivation
$- \log \Delta$ where
\[
\log \Delta (x) = [-h,x], \qquad x \in A_{\theta}^{\infty}.
\]

We  define an inner product $\langle \, , \, \rangle_{\varphi}$ on $A_{\theta}$ by
\[ \langle a,b  \rangle_{\varphi} = \varphi(b^*a), \qquad a,b \in A_{\theta}. \]
The Hilbert space obtained from completing $A_{\theta}$ with respect to this
inner product will be denoted by $\mathcal{H}_{\varphi}$.

\section{Topologically  trivial bundles with arbitrary holomorphic structures}

It is well known that holomorphic structures on a  trivial  line bundle over  a compact Riemann surface are parameterized by points
of  the Jacobian of the surface.  Thus for genus one surfaces  they are in one to one correspondence with points of the surface itself.
Its noncommutative analogue is as follows.  For a  noncommutative two torus, a
holomorphic structure   on  $ \mathcal E=\At$, considered as a free  $\At^{\infty}$-module,  is  given by a  holomorphic flat connection
\[
\nabla = \del+w: \mathcal E \to \Omega^{(1,0)}\otimes_{\Ats}\mathcal E,
\]
where $w\in \C$ and
\[\del=\delta_1+\overline\tau \delta_2.
\]
Considered as a densely defined unbounded operator $\nabla: \mathcal H_0 \to \mathcal{H}^{(1,0)}$ has a formal adjoint  given by
\[
\nabla^*=\overline  w+\del^*,
\]
where $\del^*=
\delta_1+\tau\delta_2$. Note that for the trivial bundle $\mathcal E$, the completion of $\Omega^{(1,0)}\otimes_{\Ats} \mathcal E$ can be identified by $\mathcal H^{(1,0)}$.
The Laplacian on (0,0)-sections is given by
\[\Delta_0=
\nabla^*\nabla=(\overline w+\del^*)( w+\del)=| w|^2+\overline w \del+w\del^*+\del^*\del.
\]

Let us view the operator $\nabla$ as an unbounded operator from $\mathcal{H}_{\varphi}$ to
 $\mathcal{H}^{(1,0)}$
and denote it by $\nabla_{\varphi}$. Similar to \cite{CT}, we construct an \emph{even spectral triple} by considering the
left action of $A_{\theta}$ on the
Hilbert space
\[
 \mathcal{H}= \mathcal{H}_{\varphi}\oplus \mathcal{H}^{(1,0)},
\]
and the operator
\[
 D=
\left(\begin{array}{c c}
0 & \nabla^* _{\varphi}\\
\nabla_{\varphi} & 0
\end{array}\right)
: \mathcal{H} \to \mathcal H.\]

Then  the \emph{Laplacian} has the following form:
\begin{equation}
\triangle := D^2 =  \left(\begin{array}{c c}
 \nabla^*_{\varphi}\nabla_{\varphi} & 0  \\
0 &  \nabla_{\varphi} \,\nabla^*_{\varphi}
\end{array}\right), \nonumber
 \end{equation}
 and  the \emph{grading} is given by
 \[\gamma =  \left(\begin{array}{c c}
 1 & 0  \\
0 & -1
\end{array}\right) : \mathcal{H} \to \mathcal{H}.
\]

A twisted spectral triple is constructed over $A_\theta^{op}$ in \cite{CM1}, using the Tomita anti-linear unitary map $J_\varphi$
in $\mathcal{H}_\varphi$, and the unitary right action of $A_\theta$ in $\mathcal{H}_\varphi$ given by
$a \mapsto J_\varphi a^* J_\varphi$.
It is shown that $(A_\theta^{\textnormal{op}}, \mathcal{H}, D)$ is a twisted spectral triple, see \cite{CM1}.

One can show that changing the metric within the conformal class of a given metric gives a new Laplacian  $\Delta'_{0}$ and one has
\begin{lemma}The operator $\Delta'_0: \mathcal H_0\to \mathcal H_0$ is anti-unitarily equivalent to the operator
$k\widetilde\Delta_0 k$, where
\[\widetilde\Delta_0=(-\overline w+\del^*)(- w+\del).
\]
\end{lemma}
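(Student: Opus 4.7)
The plan is to realize the equivalence via an explicit antilinear map. Set $k = e^{h/2}$, so that $\varphi(a) = \tau_0(ak^{-2})$, and define
\[
U : \mathcal H_{\varphi} \longrightarrow \mathcal H_0, \qquad U(a) = k^{-1}a^*,
\]
with inverse $U^{-1}(b) = b^* k$. Anti-unitarity is immediate from cyclicity of $\tau_0$:
\[
\|U(a)\|_0^2 = \tau_0(a k^{-2} a^*) = \tau_0(a^* a\, k^{-2}) = \varphi(a^* a) = \|a\|_{\varphi}^2.
\]
The lemma then reduces to the identity $U\Delta'_0 U^{-1} = k\widetilde\Delta_0 k$ on $\mathcal H_0$.

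First I would identify the twisted adjoint. Since $\nabla_{\varphi}$ and $\nabla$ agree as operators on the algebra, while only the source inner product is perturbed, the defining relation $\langle \nabla a, \xi \rangle_{(1,0)} = \langle a, \nabla^*_{\varphi}\xi \rangle_{\varphi}$ together with cyclicity and faithfulness of $\tau_0$ forces
\[
\nabla^*_{\varphi}\,\xi = (\nabla^*\xi)\,k^2,
\]
that is, right multiplication by $k^2$ after the ordinary adjoint $\nabla^* = \bar w + \del^*$. Consequently $\Delta'_0 a = (\nabla^*\nabla a)\, k^2$.

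The algebraic engine of the proof is the behaviour of $*$ with respect to $\del$. From $\delta_j(a^*) = -(\delta_j a)^*$ and $\bar{\bar\tau}=\tau$ one obtains $(\del X)^* = -\del^*(X^*)$, so at the operator level
\[
(\nabla X)^* = (\bar w - \del^*)X^*, \qquad (\nabla^* X)^* = (w - \del)X^*,
\]
and iterating,
\[
(\nabla^*\nabla X)^* = (w-\del)(\bar w - \del^*) X^* = \bigl(|w|^2 - w\del^* - \bar w\del + \del\del^*\bigr)X^*.
\]
Because $\delta_1$ and $\delta_2$ commute, $\del\del^* = \del^*\del$, so this operator coincides with $\widetilde\Delta_0 = (-\bar w + \del^*)(-w+\del)$. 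Assembling everything,
\[
U\Delta'_0 U^{-1}(b) = k^{-1}\bigl((\nabla^*\nabla(b^*k))\,k^2\bigr)^* = k^{-1} k^2\,(\nabla^*\nabla(b^*k))^* = k\,\widetilde\Delta_0(kb) = (k\widetilde\Delta_0 k)(b),
\]
which is the claim.

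The principal obstacle is the anti-linear book-keeping: the twisted adjoint $\nabla^*_{\varphi}$ acts by right (not left) multiplication by $k^2$, and threading this through the conjugation by the antilinear $U$ requires care about which side each $k$ ends up on. One must also notice that the operator $(w-\del)(\bar w-\del^*)$ arising from the $*$-conjugation of $\nabla^*\nabla$ coincides with $\widetilde\Delta_0 = (-\bar w+\del^*)(-w+\del)$ only after invoking $[\del,\del^*]=0$, a commutation special to the two-torus.
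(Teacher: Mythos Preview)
Your argument is correct and is essentially the paper's proof repackaged: your anti-unitary $U(a)=k^{-1}a^*$ is precisely the composite $J\circ W^{-1}$ of the paper's isometry $W(a)=ak$ with the Tomita conjugation $J(a)=a^*$, and your direct computation of $\nabla^*_{\varphi}$ as right multiplication by $k^2$ after $\nabla^*$ replaces the paper's use of the identity $\nabla_{\varphi}W=\nabla R_k$. You also make explicit the commutation $[\del,\del^*]=0$ needed to identify $(w-\del)(\bar w-\del^*)$ with $\widetilde\Delta_0$, which the paper's final line uses silently.
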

\begin{proof}
First note that the map $W:\mathcal H_0\to \mathcal H_{\varphi}$, given by $W(a)=ak$ is an isometry.
One has $\nabla_{\varphi}\circ W=\nabla\circ R_k$ and hence
\[
W^*\nabla_{\varphi}^*\nabla_{\varphi} W=R_k\nabla^*\nabla R_k.
\]

Therefore the operators $\Delta_0'$ and $R_k \Delta_0 R_k$ are unitary equivalent. On the other hand
\[
J R_k(\overline w+\del^*)( w+\del) R_kJ= J  R_k J J(\overline w+\del^*)( w+\del) J J R_kJ= k(-\overline w+\del^*)(- w+\del)k.
\]
\end{proof}
On the other hand on the space of twisted $(1,0)$-sections the twisted Dolbeault Laplacian is given by
\[\Delta_1=
\nabla\nabla^*=( w+\del)(\overline w+\del^*)=| w|^2+\overline w \del+w\del^*+\del\del^*.
\]
Perturbing the metric on its conformal class yields a new Laplacian $\Delta'_{1}$ and we have
\begin{lemma}The operator $\Delta'_1: \mathcal H^{(1,0)}\to \mathcal H^{(1,0)}$ is anti-unitarily equivalent to the operator

\[
(-\overline w+\del^*)k^2(- w+\del)
\]
\end{lemma}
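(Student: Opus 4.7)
The plan is to mimic the proof of the preceding lemma, the only change being that the Weyl factor $k^2$ will now appear sandwiched in the middle rather than on the outside.

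First I would introduce the unitary $W:\mathcal{H}_0\to\mathcal{H}_{\varphi}$, $W(a)=ak$, and observe that as operators $\mathcal{H}_0\to\mathcal{H}^{(1,0)}$ one has $\nabla_{\varphi}\circ W=\nabla\circ R_k$, where $R_k$ denotes right multiplication by $k$ on $\mathcal{H}_0$. Taking adjoints yields $W^*\nabla_{\varphi}^*=R_k\nabla^*$, and since $W$ is unitary and $R_k$ is self-adjoint on $\mathcal{H}_0$ this gives $\nabla_{\varphi}^*=WR_k\nabla^*$ and $\nabla_{\varphi}=\nabla R_kW^*$. Multiplying them out and using $W^*W=1_{\mathcal{H}_0}$ together with $R_k^2=R_{k^2}$, one obtains
\[
\Delta'_1=\nabla_{\varphi}\nabla_{\varphi}^*=\nabla\, R_{k^2}\,\nabla^*
\]
as an operator on $\mathcal{H}^{(1,0)}$.

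The second step is to conjugate the right hand side by the Tomita anti-unitary $J$ extending the $*$-operation, exactly as in the previous lemma. The ingredients needed are $JR_{k^2}J=L_{k^2}$ (since $k=k^*$), $J\partial J=-\partial^*$ and $J\partial^*J=-\partial$ (which follow from $\delta_j(a^*)=-\delta_j(a)^*$ together with $\partial=\delta_1+\bar\tau\delta_2$ and $\partial^*=\delta_1+\tau\delta_2$), and $JwJ=\bar w$ as scalar multiplication since $J$ is anti-linear. Writing $\nabla=w+\partial$ and $\nabla^*=\bar w+\partial^*$ and conjugating one obtains
\[
J(w+\partial)R_{k^2}(\bar w+\partial^*)J=(\bar w-\partial^*)\,k^2\,(w-\partial)=(-\bar w+\partial^*)\,k^2\,(-w+\partial),
\]
which is precisely the claimed operator.

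The main point that must be made precise, and the step I expect to be the real subtlety, is the interpretation of the Tomita conjugation as an anti-unitary relating $\mathcal{H}^{(1,0)}$ with $\mathcal{H}_0$ compatibly with $\partial$ and $\partial^*$, so that the above conjugation genuinely realizes an anti-unitary equivalence between $\Delta'_1$ acting on $\mathcal{H}^{(1,0)}$ and $(-\bar w+\partial^*)k^2(-w+\partial)$ acting on $\mathcal{H}_0$. Once this identification is in place, in full analogy with the preceding lemma, the formal calculation above supplies the result.
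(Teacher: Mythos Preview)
Your proposal is correct and is precisely the adaptation of the preceding lemma that the paper intends; the paper's own proof consists of the single sentence ``The proof is similar to the previous lemma.'' Your derivation of $\Delta'_1=\nabla R_{k^2}\nabla^*$ from $\nabla_\varphi W=\nabla R_k$ and the subsequent $J$-conjugation are exactly the expected steps, and your remark that one must identify $\mathcal H^{(1,0)}$ with $\mathcal H_0$ so that $J$ implements the anti-unitary equivalence is the only point the paper leaves implicit.
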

\begin{proof}
The proof is similar to the previous lemma.
\end{proof}
\subsection{Connes' pseudodifferential operators on $\mathbb{T}_\theta^2$.}
For a non-negative integer $n$, the space of differential operators
on $A_{\theta}^{\infty}$ of order at most $n$ is defined to be the
vector space of operators of the form
\[ \sum_{j_1+j_2 \leq n } a_{j_1,j_2}
\nabla_1^{j_1} \nabla_2^{j_2}, \qquad j_1, \, j_2 \geq 0, \qquad a_{j_1,j_2}
\in A_{\theta}^{\infty},\]
where $\nabla_i=\delta_i+z_i$ for $i=1,2$. Here $z_1$ and $z_2$ are two complex numbers such that $w=z_1+\overline \tau z_2$.

The notion of a differential operator on $A_{\theta}^{\infty}$ can be generalized
to the notion of a pseudodifferential operator using operator valued symbols \cite{C0}. In fact
this is achieved by considering the pseudodifferential calculus associated to $C^*$-dynamical systems
\cite{C0}, for the canonical dynamical system $(A_\theta^\infty, \{\alpha_s \})$. In the
sequel, we shall use the notation $\partial_1=\frac{\partial}{\partial \xi_1}$, $\partial_2=\frac{\partial}{\partial \xi_2}$.

\begin{definition}
For an integer $n$, a smooth map $\rho: \mathbb{R}^2 \to A_{\theta}^{\infty}$ is  said
to be a symbol of order $n$, if for all non-negative integers $i_1, i_2, j_1,
j_2,$
\[ ||\delta_1^{i_1} \delta_2^{i_2} \partial_1^{j_1} \partial_2^{j_2} \rho(\xi) ||
\leq c (1+|\xi|)^{n-j_1-j_2},\]
where $c$ is a constant, and if there exists a smooth map $k: \mathbb{R}^2 \to
A_{\theta}^{\infty}$ such that
\[\lim_{ \lambda \to \infty}  \lambda^{-n} \rho( \lambda\xi_1,  \lambda\xi_2) = k (\xi_1, \xi_2).\]
The space of symbols of order $n$ is denoted by $S_n$.
\end{definition}

To a symbol $\rho$ of order $n$, one can associate an operator on $A_{\theta}^{\infty}$,
denoted by $P_{\rho}$, given by
\[ P_{\rho}(a) = (2 \pi)^{-2} \int \int e^{-is \cdot \xi} \rho(\xi) \alpha_s(a) \,ds \,
d\xi. \]
The operator $P_{\rho}$ is said to be a pseudodifferential operator of order $n$. For
example, the differential operator $\sum_{j_1+j_2 \leq n } a_{j_1,j_2}
\delta_1^{j_1} \delta_2^{j_2}$ is associated with the symbol $\sum_{j_1+j_2 \leq n } a_{j_1,j_2}
\xi_1^{j_1} \xi_2^{j_2}$ via the above formula.

One can define the equivalent symbols as \cite{CT} and find the multiplication and adjoint symbol formula.

\begin{definition}\label{elliptic}
Let $\rho$ be a symbol of order $n$. It is said to be elliptic if $\rho(\xi)$ is invertible for $\xi \neq 0$, and if
there exists a constant $c$ such that
\[ || \rho(\xi)^{-1} || \leq c (1+|\xi|)^{-n} \]
for sufficiently large $|\xi|.$
\end{definition}

By the Cauchy integral formula, for $i=0,1$, one has
\[
e^{-t\Delta'_i}=\frac{1}{2\pi i}
\int_C e^{-t\lambda}(\Delta'_i-\lambda)^{-1} d\lambda,
\]
where $C$ is a curve in $\C$, which goes around the non-negative real axis in counter clockwise direction without touching it. From this one can obtain
the asymptotic expansion
\[
\text{Tr}(e^{-t\Delta'_i})\sim t^{-1}\sum B_{2n}(\Delta'_i) t^n, \qquad t\to 0^+, \qquad i=0,1.
\]
By McKean-Singer formula one has the following formula for the index of the twisted Dolbeault complex
\[
\text{Index}(\nabla_{\varphi})=\sum_{i=0}^{1} (-1)^i \text{Tr}(e^{-t \Delta'_i}).
\]
This gives us the following formula for the index
\[\text{Index}(\nabla_{\varphi})=B_2(\Delta'_0)-B_2(\Delta'_1).
\]
To find this value, one can approximate the inverse of $(\Delta'_i-\lambda)$ by a pseudodifferential operator
$B_{\lambda}$ with a symbol $\sigma(B_{\lambda})$ of the form
\[
b_0^i(\xi,\lambda)+b_1^i(\xi,\lambda)+b_2^i(\xi,\lambda)+\cdots,
\]
where $b_j^i(\xi,\lambda)$ is a symbol of order $-2-j$ for $i=0,1$, and
\[
\sigma(B_{\lambda}(\Delta'_i-\lambda))\sim1.
\]
Therefore, one can find that
\[
B_2(\Delta'_i)=\frac{1}{2\pi i}\int \int_C e^{-\lambda}\tau_0(b_2^i(\xi,\lambda))d \xi d\lambda.
\]
As in \cite{CT,CM1}, one can see that the contour integration can be dropped by a homogeneity argument and therefore
\[
B_2(\Delta'_i)=\int \tau_0(b_2^i(\xi,-1))d \xi, \qquad i=0,1.
\]
In the next section we will compute this index and show that it is zero.

\section{The computation of $B_2(\Delta'_0)$ and $B_2(\Delta'_1)$.}

In order to find the value of the $B_2(\Delta'_i)$ for $\Delta'_0\sim k(-\overline w+\del^*)(- w+\del)k$
and $ \Delta'_1\sim(-\overline w+\del^*)k^2(- w+\del) $, following the approach described in
 the previous section, we need to find the symbol of these operators.

For $w=0$, the following lemma reduce to Lemma 4.1 of \cite{FK}.
\begin{lemma}
The symbol of the operator $\sigma(k\widetilde\Delta_0 k)=a_0(\xi)+a_1(\xi)+a_2(\xi)$
\[
a_2=\xi_1^2k^2+|\tau|^2\xi_2^2k^2+2\tau_1 \xi_1\xi_2k^2,\]
\[
a_1=2\xi _1k \delta _1(k)+2|\tau |^2\xi _2k \delta _2(k)+2\tau _1 \xi _1k \delta _2(k)
+2\tau _1 \xi _2k \delta _1(k)-2w_1 \xi _1k^2-2w_1\tau _1  \xi _2k^2+2w_2 \tau _2 \xi _2k^2 \]
\[
a_0=k \delta _1 \delta _1(k) +|\tau |^2k \delta _2 \delta _2(k) +2\tau _1 k \delta _1 \delta _2(k)
-2w_1k \delta _1(k)-2w_1\tau _1k \delta _2(k)+2w_2\tau _2k \delta _2(k)+|w|^2k^2
\]
\end{lemma}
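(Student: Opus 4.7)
The plan is a direct computation of the symbol of the differential operator $k\widetilde\Delta_0 k$ using the symbol calculus described in the previous section, reducing to the case $w=0$ (which is Lemma 4.1 of \cite{FK}) plus a handful of additional first- and zeroth-order contributions.

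First, I would expand $\widetilde\Delta_0 = (-\overline w + \del^*)(-w + \del)$ by distributing, using that $w\in\C$ commutes with $\del,\del^*$, to obtain
\[
\widetilde\Delta_0 \;=\; |w|^2 \;-\; \overline w\,\del \;-\; w\,\del^* \;+\; \del^*\del .
\]
Thus $k\widetilde\Delta_0 k$ splits as the sum of four operators: $|w|^2 k^2$ (already zero-order), $-\overline w\, k\del k$, $-w\, k\del^* k$, and $k\del^*\del k$. The last of these is exactly the operator whose symbol is handled in Lemma 4.1 of \cite{FK}, so after recording its contribution I would only need to compute the two first-order pieces $k\del k$ and $k\del^* k$ from scratch.

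Next I would convert each piece into the form $\sum L_{c_\alpha}\delta^\alpha$ (multiplication operators composed with iterated $\delta_j$'s) by applying the Leibniz rule to the $\del$- and $\del^*$-derivatives that fall on the right-hand factor of $k$. For instance,
\[
k\,\del\,k \;=\; L_{k\del(k)} \;+\; L_{k^2}\,\del,\qquad k\,\del^*\,k \;=\; L_{k\del^*(k)} \;+\; L_{k^2}\,\del^*,
\]
and an analogous, slightly longer expansion for $k\,\del^*\del\, k$ that produces four summands (the two ``mixed'' ones involving $L_{k\del(k)}\del^*$ and $L_{k\del^*(k)}\del$). Once each operator is in this normal form, the symbol is read off immediately by the rule $\sigma(L_a\delta^\alpha)=a\,\xi^\alpha$, with $\sigma(\del)=\xi_1+\overline\tau\xi_2$ and $\sigma(\del^*)=\xi_1+\tau\xi_2$.

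Finally I would collect terms by homogeneity in $\xi$. The degree-two part comes only from $L_{k^2}\del^*\del$ and gives $k^2|\xi_1+\overline\tau\xi_2|^2 = k^2\xi_1^2 + 2\tau_1 k^2\xi_1\xi_2 + |\tau|^2 k^2\xi_2^2$, matching $a_2$. The degree-one part combines the mixed terms of $k\del^*\del k$ (yielding $2k\delta_1(k)\xi_1 + 2|\tau|^2 k\delta_2(k)\xi_2 + 2\tau_1 k\delta_1(k)\xi_2 + 2\tau_1 k\delta_2(k)\xi_1$) with the top-order pieces of $-\overline w\, k\del k$ and $-w\, k\del^* k$, using $w+\overline w=2w_1$ and $w\tau+\overline w\,\overline\tau=2(w_1\tau_1-w_2\tau_2)$ to obtain precisely $a_1$. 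The degree-zero part assembles $|w|^2 k^2$, the $-\overline w\, k\del(k)$ and $-w\, k\del^*(k)$ pieces, and the $k\,\del^*\del(k)$ term; the same real-part identities yield $a_0$.

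The only obstacle is careful bookkeeping with the complex constants $w,\overline w,\tau,\overline\tau$ and making sure the Leibniz-rule expansion of the second-order piece $k\del^*\del k$ keeps the operator-ordering straight (the ``mixed'' symmetric terms come with a $+$ sign, and the commutativity of $\delta_1,\delta_2$ is used to fold $\tau\delta_2\delta_1+\overline\tau\delta_1\delta_2$ into $2\tau_1\delta_1\delta_2$). No genuine analytic input is needed beyond the derivation property of $\del,\del^*$ and the fact that multiplication by $k$ acts as a zero-order operator with symbol $k$.
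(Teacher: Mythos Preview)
Your proposal is correct and follows essentially the same approach as the paper, which simply records that $\sigma(-w+\del)=-w+\xi_1+\overline\tau\xi_2$ and $\sigma(-\overline w+\del^*)=-\overline w+\xi_1+\tau\xi_2$ and leaves the rest as a routine expansion. Your write-up is more detailed---splitting off the $w=0$ piece (Lemma~4.1 of \cite{FK}) and handling the first-order corrections via Leibniz and the real-part identities $w+\overline w=2w_1$, $w\tau+\overline w\,\overline\tau=2(w_1\tau_1-w_2\tau_2)$---but the underlying argument is the same direct symbol computation.
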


\begin{proof}
The proof easily can be obtained from the fact that
$\sigma(- w+\del)=-w+\xi_1+\overline \tau \xi_2$, $\sigma(-\overline w+\del^*)=-\overline w+\xi_1+ \tau \xi_2$.
\end{proof}

\begin{lemma}
The symbol of the operator $\sigma((-\overline w+\del^*)k^2(- w+\del))=a_0(\xi)+a_1(\xi)+a_2(\xi)$,
\[
a_2=\xi_1^2k^2+|\tau|^2\xi_2^2k^2+2\tau_1 \xi_1\xi_2k^2,\]
\[a_1=\xi _1\delta _1(k^2)+\tau  \xi _1\delta _2(k^2)+\overline\tau\xi _2\delta _1(k^2)+|\tau |^2\xi _2\delta _2(k^2)-2w_1\xi _1k^2-2w_1\tau _1\xi _2k^2+2\tau _2w_2\xi _2k^2\]
\[
a_0=-w\delta _1(k^2)-w\tau \delta _2(k^2)+|w|^2k^2
\]
\end{lemma}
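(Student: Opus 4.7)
The plan is to apply the symbol composition formula from Connes' pseudodifferential calculus to the triple product $(-\overline w+\del^*)\circ M_{k^2}\circ(-w+\del)$, where $M_{k^2}$ denotes left multiplication by $k^2$. Because the middle factor has a symbol independent of $\xi$ and the two outer factors have symbols that are affine in $\xi$, the asymptotic expansion $\sigma(PQ)\sim\sum_\alpha \tfrac{1}{\alpha!}\partial^\alpha_\xi\sigma(P)\,\delta^\alpha\sigma(Q)$ terminates after finitely many terms, so the computation is entirely algebraic.

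First I record the component symbols: $\sigma(-w+\del)(\xi)=-w+\xi_1+\overline\tau\xi_2$, $\sigma(M_{k^2})(\xi)=k^2$, and $\sigma(-\overline w+\del^*)(\xi)=-\overline w+\xi_1+\tau\xi_2$. The inner composition is trivial since every $\xi$-derivative of $k^2$ vanishes, giving
\[ \sigma(M_{k^2}\circ(-w+\del))(\xi)=-wk^2+\xi_1 k^2+\overline\tau\xi_2 k^2. \]
For the outer composition I use $\partial_{\xi_1}\sigma(-\overline w+\del^*)=1$, $\partial_{\xi_2}\sigma(-\overline w+\del^*)=\tau$, and the vanishing of higher $\xi$-derivatives, so the full symbol becomes
\[ (-\overline w+\xi_1+\tau\xi_2)\bigl(-wk^2+\xi_1 k^2+\overline\tau\xi_2 k^2\bigr)+\delta_1(-wk^2+\xi_1 k^2+\overline\tau\xi_2 k^2)+\tau\,\delta_2(-wk^2+\xi_1 k^2+\overline\tau\xi_2 k^2). \]

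Finally I sort by degree of homogeneity in $\xi$. The $\xi^2$ terms arise solely from the first product and reduce via $\tau+\overline\tau=2\tau_1$ to the stated $a_2$. The $\xi^1$ terms combine a multiplicative piece, for which the identities $w+\overline w=2w_1$ and $w\tau+\overline w\overline\tau=2\mathrm{Re}(w\tau)=2(w_1\tau_1-w_2\tau_2)$ deliver the scalar part $-2w_1\xi_1 k^2-2w_1\tau_1\xi_2 k^2+2w_2\tau_2\xi_2 k^2$, with a first-order correction piece $\xi_1\delta_1(k^2)+\tau\xi_1\delta_2(k^2)+\overline\tau\xi_2\delta_1(k^2)+|\tau|^2\xi_2\delta_2(k^2)$ coming from the derivative term in the composition formula, and together these reproduce $a_1$. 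The $\xi^0$ terms collect to $a_0=|w|^2k^2-w\delta_1(k^2)-w\tau\delta_2(k^2)$. There is no real obstacle here; the only point demanding care is bookkeeping of the conjugation on $w$ and $\tau$ in the outer $\del^*$ factor, which is precisely what distinguishes this computation from the earlier one for $\sigma(k\widetilde\Delta_0 k)$.
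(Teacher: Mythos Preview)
Your proof is correct and follows the same approach as the paper, which simply remarks that the argument is analogous to the preceding lemma and rests on the symbols $\sigma(-w+\del)=-w+\xi_1+\overline\tau\xi_2$ and $\sigma(-\overline w+\del^*)=-\overline w+\xi_1+\tau\xi_2$. You have merely written out in full the symbol composition that the paper leaves implicit.
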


\begin{proof} 
The proof is similar to the previous lemma.
\end{proof}

Using similar methods as in \cite{CT,FK}, one can obtain the folowing formulas for $b_2$:

\begin{align*}
b_2=-(b_0a_0b_0+b_1a_1b_0+\del_i(b_0)\delta_i(a_1)b_0+\del_i(b_1)\delta_i(a_2)b_0+1/2 \del_i\del_j(b_0) \delta_i\delta_j(a_2)b_0).
\end{align*}
Here we have used the summation on repeated indices.

\subsection{The computation of $B_2(\Delta'_0)$ }
 One can apply the method of \cite{CT,FK,FK2} to obtain all the terms. We just give the terms which are new compared to those which already appeared in  \cite{FK2} (after integration over the $\xi$ plane). The rest of therms are as those found in \cite{FK2}.

To do integration over $\xi$, one uses the following substitution rule (see \cite{FK})
\begin{align}\label{GC formula}
\xi_1=r\cos \theta -\frac{\tau_1}{\tau_2}r\sin\theta,\qquad \xi_2=\frac{r}{\tau_2}\sin\theta.
\end{align}
Up to an overall factor $\frac{r}{\tau_2}$, the extra terms comparing $\cite{FK2}$ are
\begin{align*}
&-2 \pi  |w|^2 b_0^2 k^2+4 \pi  r^2| w|^2 b_0^3 k^4
+4 \pi  w_1 b_0^2 k \delta _1(k)-2 \pi  b_0^2 k \delta _1 \delta _1(k) -4 \pi  \tau _1 b_0^2 k \delta _1 \delta _2(k)
\\&+4 \pi  w_1 \tau _1 b_0^2 k \delta _2(k)
-4 \pi  w_2 \tau _2 b_0^2 k \delta _2(k)-2 \pi  \tau _1^2 b_0^2 k \delta _2 \delta _2(k) -2 \pi  \tau _2^2 b_0^2 k \delta _2 \delta _2(k)
+8 \pi  r^2 b_0^3 k^2 \delta _1(k) ^2
\\&
+8 \pi  r^2 \tau _1^2 b_0^3 k^2 \delta _2(k) ^2+8 \pi  r^2 \tau _2^2 b_0^3 k^2 \delta _2(k) ^2-24 \pi  r^2 w_1 b_0^3 k^3 \delta _1(k)
+8 \pi  r^2 b_0^3 k^3 \delta _1 \delta _1(k)
\\&
+12 \pi  r^2 \tau _1 b_0^3 k^3 \delta _1 \delta _2(k) -24 \pi  r^2 w_1 \tau _1 b_0^3 k^3 \delta _2(k)+24 \pi  r^2 w_2 \tau _2 b_0^3 k^3 \delta _2(k)
\\&
+4 \pi  r^2 \tau _1 b_0^3 k^3 \delta _2 \delta _1(k) +8 \pi  r^2 \tau _1^2 b_0^3 k^3 \delta _2 \delta _2(k) +8 \pi  r^2 \tau _2^2 b_0^3 k^3 \delta _2 \delta _2(k)
-8 \pi  r^4 b_0^4 k^4 \delta _1(k) ^2
\\&
-8 \pi  r^4 \tau _1^2 b_0^4 k^4 \delta _2(k) ^2-8 \pi  r^4 \tau _2^2 b_0^4 k^4 \delta _2(k) ^2+24 \pi  r^4 w_1 b_0^4 k^5 \delta _1(k)
-8 \pi  r^4 b_0^4 k^5 \delta _1 \delta _1(k)
\\& -16 \pi  r^4 \tau _1 b_0^4 k^5 \delta _1 \delta _2(k) +24 \pi  r^4 w_1 \tau _1 b_0^4 k^5 \delta _2(k)
-24 \pi  r^4 w_2 \tau _2 b_0^4 k^5 \delta _2(k)-8 \pi  r^4 \tau _1^2 b_0^4 k^5 \delta _2 \delta _2(k)
\\& -8 \pi  r^4 \tau _2^2 b_0^4 k^5 \delta _2 \delta _2(k)
+8 \pi  r^2 \tau _1 b_0^3 k^2 \delta _1(k) \delta _2(k)+8 \pi  r^2 \tau _1 b_0^3 k^2 \delta _2(k) \delta _1(k)-8 \pi  r^4 \tau _1 b_0^4 k^4 \delta _1(k) \delta _2(k)\\&
-8 \pi  r^4 \tau _1 b_0^4 k^5 \delta _2(k) \delta _1(k).
\end{align*}

Integrating over $r$, up to factor $\frac{\pi}{3\tau_2}$,  gives us
\begin{align*}
&-k^{-1}\delta_1\delta_1(k)-|\tau|^2k^{-1}\delta_2\delta_2(k)-2k^{-1}\delta_1\delta_2(k)\\
&+2k^{-2}\delta_1(k)^2+2|\tau|^2k^{-2}\delta_2(k)^2+2\tau_1(\delta_1(k)\delta_2(k)+\delta_2(k)\delta_1(k)).
\end{align*}
Surprisingly, the holomorphic structure of the bundle does not contribute in the formula above and this exactly coincides with the terms found in Section 4.1. in \cite{FK}. Indeed, we have

\[\int_0^{\infty}\Big(-2 \pi  |w|^2 b_0^2 k^2+4 \pi  r^2 |w|^2 b_0^3 k^4\Big)(\frac{rdr}{\tau_2})=0,
\]
\[
\int_0^{\infty}\Big(-4 \pi  w_1 b_0^2 k \delta _1 (k)+24 \pi  r^4 w_1 b_0^4 k^5 \delta _1 (k)-24 \pi  r^2 w_1 b_0^3 k^3 \delta _1 (k)\Big)(\frac{rdr}{\tau_2})=0,
\]
and
\begin{align*}
\int_0^{\infty}\Big(24 \pi  r^4 w_1 \tau _1 b_0^4k^5-24 \pi  r^4 w_2 \tau _2 b_0^4k^5-24 \pi  r^2 w_1 \tau _1 b_0^3k^3+24 \pi  r^2 w_2 \tau _2 b_0^3k^3\\
+4 \pi  w_1 \tau _1 b_0^2k-4 \pi  w_2 \tau _2 b_0^2k\Big)(\frac{rdr}{\tau_2})=0.
\end{align*}

\subsection{The computation of $B_2(\Delta'_1)$.}

Now we would like to give the computation of $B_2(\Delta'_1) $ on $(1,0 )$- sections. As in the previous section, we just mention the integrated (over $\xi$ plane of) the new terms.

 Up to overall factor of $r/\tau_2$ these terms are given by

\begin{align*}
&-2 \pi  |w|^2 b_0^2 k^2+4 \pi  r^2| w|^2 b_0^3 k^4+4 \pi  w_1 b_0^2 k \delta _1(k)+4 i \pi  w_2 b_0^2 k \delta _1(k)+4 \pi  w_1 \tau _1 b_0^2 k \delta _2(k)
\\&+4 i \pi  w_2 \tau _1 b_0^2 k \delta _2(k)+4 i \pi  w_1 \tau _2 b_0^2 k \delta _2(k)-4 \pi  w_2 \tau _2 b_0^2 k \delta _2(k)+8 \pi  r^2 b_0^3 k^2 \delta _1(k) ^2
\\&+8 \pi  r^2 \tau _1^2 b_0^3 k^2 \delta _2(k) ^2
+8 \pi  r^2 \tau _2^2 b_0^3 k^2 \delta _2(k) ^2-24 \pi  r^2 w_1 b_0^3 k^3 \delta _1(k)-8 i \pi  r^2 w_2 b_0^3 k^3 \delta _1(k)
\\
&+8 \pi  r^2 b_0^3 k^3 \delta _1 \delta _1(k) +12 \pi  r^2 \tau _1 b_0^3 k^3 \delta _1 \delta _2(k) +4 i \pi  r^2 \tau _2 b_0^3 k^3 \delta _1 \delta _2(k)
\\
&-24 \pi  r^2 w_1 \tau _1 b_0^3 k^3 \delta _2(k)-8 i \pi  r^2 w_2 \tau _1 b_0^3 k^3 \delta _2(k)-8 i \pi  r^2 w_1 \tau _2 b_0^3 k^3 \delta _2(k)\\
&+24 \pi  r^2 w_2 \tau _2 b_0^3 k^3 \delta _2(k)+4 \pi  r^2 \tau _1 b_0^3 k^3 \delta _2 \delta _1(k) -4 i \pi  r^2 \tau _2 b_0^3 k^3 \delta _2 \delta _1(k)
\\&
+8 \pi  r^2 \tau _1^2 b_0^3 k^3 \delta _2 \delta _2(k) +8 \pi  r^2 \tau _2^2 b_0^3 k^3 \delta _2 \delta _2(k) -8 \pi  r^4 b_0^4 k^4 \delta _1(k) ^2-8 \pi  r^4 \tau _1^2 b_0^4 k^4 \delta _2(k) ^2
\\&-8 \pi  r^4 \tau _2^2 b_0^4 k^4 \delta _2(k) ^2
+24 \pi  r^4 w_1 b_0^4 k^5 \delta _1(k)-8 \pi  r^4 b_0^4 k^5 \delta _1 \delta _1(k) -16 \pi  r^4 \tau _1 b_0^4 k^5 \delta _1 \delta _2(k)
\\&
+24 \pi  r^4 w_1 \tau _1 b_0^4 k^5 \delta _2(k)-24 \pi  r^4 w_2 \tau _2 b_0^4 k^5 \delta _2(k)-8 \pi  r^4 \tau _1^2 b_0^4 k^5 \delta _2 \delta _2(k)
\\&-8 \pi  r^4 \tau _2^2 b_0^4 k^5 \delta _2 \delta _2(k) +8 \pi  r^2 \tau _1 b_0^3 k^2 \delta _1(k) \delta _2(k)+8 \pi  r^2 \tau _1 b_0^3 k^2 \delta _2(k) \delta _1(k)
\\&
-8 \pi  r^4 \tau _1 b_0^4 k^4 \delta _1(k) \delta _2(k)-8 \pi  r^4 \tau _1 b_0^4 k^4 \delta _2(k) \delta _1(k).
\end{align*}

Integrating over $r$, up to a factor of $\frac{2 \pi }{3  \tau _2}$, gives
\begin{align*}
&k^{-2}\delta_1(k)^2+|\tau|^2k^{-2}\delta_2(k)^2+k^{-1}\delta_1\delta_1(k)+|\tau|^2k^{-1}\delta_2\delta_2(k)\\&
+\tau_1k^{-1}\delta_1\delta_2(k)+\tau_1k^{-1}\delta_2\delta_1(k)+\tau_1k^{-2}\delta_1(k)\delta_2(k)+\tau_1\delta_2(k)\delta_1(k).
\end{align*}
Again, the holomorphic structure of the bundle does not contribute to the formula and this is because
\[
\int_0^{\infty}\Big(-2 \pi  |w|^2 b_0^2k^2+4 \pi  r^2| w|^2 b_0^3k^4\Big)\frac{rdr}{\tau_2}=0,
\]
\[
\int_0^{\infty}\Big(24 \pi  r^4 w_1 b_0^4k^5-24 \pi  r^2 w_1 b_0^3k^3-8 i \pi  r^2 w_2 b_0^3k^3+4 \pi  w_1 b_0^2k+4 i \pi  w_2 b_0^2k\Big)\frac{rdr}{\tau_2}=0,
\]
and
\begin{align*}
\int_0^{\infty}\Big(24 \pi  r^4 w_1 \tau _1 b_0^4k^5-24 \pi  r^4 w_2 \tau _2 b_0^4k^5-24 \pi  r^2 w_1 \tau _1 b_0^3k^3-8 i \pi  r^2 w_2 \tau _1 b_0^3k^3-
8 i \pi  r^2 w_1 \tau _2 b_0^3k^3\\+24 \pi  r^2 w_2 \tau _2 b_0^3k^3+
4 \pi  w_1 \tau _1 b_0^2k+4 i \pi  w_2 \tau _1 b_0^2k+4 i \pi  w_1 \tau _2 b_0^2k-4 \pi  w_2 \tau _2 b_0^k\Big)\frac{rdr}{\tau_2}=0.
\end{align*}

One can combine the terms with $b_0^2$ in the middle with the terms with $b_0$ in the middle. First for the sake of notation we introduce
\[
g^{ij}=\begin{pmatrix}
1 & \overline \tau\\
\tau & |\tau|^2
\end{pmatrix}
,\quad h^{ij}=\begin{pmatrix}
1 &  \tau_1\\
\tau_1 & |\tau|^2
\end{pmatrix}.
\]

Now combining these two type terms one has
\begin{align*}
T=&-2\pi g^{ij} r^3 b_0^2k^2\delta_i(k)b_0\delta_j(k)-4\pi g^{ij} r^3 b_0^2k\delta_i(k)b_0k\delta_j(k)-2\pi g^{ij} r^3 b_0^2\delta_i(k)b_0k^2\delta_j(k)
\\
&
+4\pi g^{ij} r^5 b_0^3k^4\delta_i(k)b_0\delta_j(k)+8\pi g^{ij} r^5 b_0^3k^3\delta_i(k)b_0k\delta_j(k)+4\pi g^{ij} r^5 b_0^3k^2\delta_i(k)b_0k^2\delta_j(k)
\\
&
-4\pi h^{ij} r^7 b_0^4k^6\delta_i(k)b_0\delta_j(k)-8\pi h^{ij} r^7 b_0^4k^5\delta_i(k)b_0k\delta_j(k)
-4\pi h^{ij} r^7 b_0^4k^4\delta_i(k)b_0k^2\delta_j(k)
\end{align*}

Here we have used the summation on repeated indices. Applying the computation given in \cite{FK} page 20, for the first term of $T$, we get
\[
\int -2\pi g^{ij} r^3 b_0^2k^2\delta_i(k)b_0\delta_j(k) dr=-\pi g^{ij}k^{-2}\mathcal D_1(\delta_i(k))\delta_j(k),
\]
where $\mathcal D_m=\mathcal L_m(\Delta)$ and $\mathcal L_m$ is the modified logarithm function given by \cite{CT}
\[
\mathcal L_m(u)=(-1)^m(u-1)^{-(m+1)}\Big (\log u -\sum_{i=1}^m (-1)^{i+1}\frac{(u-1)^i}{i}\Big ).
\]
For the second and third terms of first line of $T$, we use Connes-Tretkoff lemma \cite{CT} to get
as
\[
\int -4\pi g^{ij} r^3 b_0^2k\delta_i(k)b_0k\delta_j(k)dr=-2\pi g^{ij}k^{-2}\mathcal D_1\Delta^{1/2}(\delta_i(k))\delta_j(k),
\]
and
\[
\int -2\pi g^{ij} r^3 b_0^2\delta_i(k)b_0k^2\delta_j(k)dr=-\pi g^{ij}k^{-2}\mathcal D_1\Delta(\delta_i(k))\delta_j(k).
\]
For the second line in $T$ one has
\begin{align*}
\int 4 \pi g^{ij} r^5 b_0^3k^4\delta_i(k)b_0\delta_j(k)dr&=2\pi g^{ij}k^{-2}\mathcal D_2(\delta_i(k))\delta_j(k)
\end{align*}
\[
\int 8\pi g^{ij} r^5 b_0^3k^3\delta_i(k)b_0k\delta_j(k)dr=4\pi g^{ij}k^{-2}\mathcal D_2\Delta^{1/2}(\delta_i(k))\delta_j(k),
\]
and
\[
\int 4\pi g^{ij} r^5 b_0^3k^2\delta_i(k)b_0k^2\delta_j(k)dr=2\pi g^{ij}k^{-2}\mathcal D_2\Delta(\delta_i(k))\delta_j(k).
\]
For the last line of $T$, we have
\[
\int-4\pi h^{ij} r^7 b_0^4k^6\delta_i(k)b_0\delta_j(k)dr=-2\pi h^{ij}k^{-2}\mathcal D_3(\delta_i(k))\delta_j(k),
\]
\[
\int-8\pi h^{ij} r^7 b_0^4k^5\delta_i(k)b_0k\delta_j(k)dr=-4\pi h^{ij}k^{-2}\mathcal D_3\Delta^{1/2}(\delta_i(k))\delta_j(k),
\]
\[
\int -4\pi h^{ij} r^7 b_0^4k^4\delta_i(k)b_0k^2\delta_j(k)dr=-2\pi h^{ij}k^{-2}\mathcal D_3\Delta(\delta_i(k))\delta_j(k).
\]
Hence

\begin{align*}
B_2(\Delta'_1)=
&\frac{2\pi}{3}\Big(k^{-2}\delta_1(k)^2+|\tau|^2k^{-2}\delta_2(k)^2+k^{-1}\delta_1\delta_1(k)+|\tau|^2k^{-1}\delta_2\delta_2(k)
\\&
+2\tau_1k^{-1}\delta_1\delta_2(k)+\tau_1k^{-2}\delta_1(k)\delta_2(k)+\tau_1k^{-2}\delta_2(k)\delta_1(k)\Big )
\\
&
-\pi g^{ij}k^{-2}\mathcal D_1(\delta_i(k))\delta_j(k)-2\pi g^{ij}k^{-2}\mathcal D_1\Delta^{1/2}(\delta_i(k))\delta_j(k)
-\pi g^{ij}k^{-2}\mathcal D_1\Delta(\delta_i(k))\delta_j(k)\\
&+
2\pi g^{ij}k^{-2}\mathcal D_2(\delta_i(k))\delta_j(k)+4\pi g^{ij}k^{-2}\mathcal D_2\Delta^{1/2}(\delta_i(k))\delta_j(k)
+2\pi g^{ij}k^{-2}\mathcal D_2\Delta(\delta_i(k))\delta_j(k)\\
&-2\pi h^{ij}k^{-2}\mathcal D_3(\delta_i(k))\delta_j(k)-4\pi h^{ij}k^{-2}\mathcal D_3\Delta^{1/2}(\delta_i(k))\delta_j(k)-2\pi h^{ij}k^{-2}\mathcal D_3\Delta(\delta_i(k))\delta_j(k).
\end{align*}
Therefore
\begin{align*}
B_2(\Delta'_1)=&
2\pi k^{-2} h^{ij}\Big(\frac{1}{3}\delta_i(k)\delta_j(k)+\frac{1}{3}\Delta^{-1/2}(\delta_i(k))\delta_j(k)-\mathcal D_3(1+\Delta^{1/2})^2\delta_i(k)\delta_j(k)\Big)
\\
&+\pi k^{-2} g^{ij}\Big((-\mathcal D_1+2\mathcal D_2) ((1+\Delta^{1/2})^2(\delta_i(k))\delta_j(k)\Big ).
\end{align*}

To find a simpler formula, note that
\[
g^{ij}=h^{ij}+\epsilon^{ij},
\]
where
\[
\epsilon^{ij}:=i\tau_2\begin{pmatrix}
                        0&-1\\
                         1&0
                        \end{pmatrix}.
\]
Then

\begin{align*}
B_2(\Delta'_1)=&\pi k^{-2} h^{ij}\Big(\frac{2}{3}\delta_i(k)\delta_j(k)+\frac{2}{3}\Delta^{-1/2}(\delta_i(k))\delta_j(k)+\\
&\qquad(-\mathcal D_1+2\mathcal D_2-2\mathcal D_3)(1+\Delta^{1/2})^2\delta_i(k)\delta_j(k)\Big)
\\
+&\pi k^{-2} \epsilon^{ij}\Big((-\mathcal D_1+2\mathcal D_2) ((1+\Delta^{1/2})^2(\delta_i(k))\delta_j(k)\Big).
\end{align*}
Let us define

\[
f(u):=\frac{2}{3}+\frac{2}{3}u^{-1/2}+(-\mathcal L_1(u)+2\mathcal L_2(u)-2\mathcal L_3(u))(1+u^{1/2})^2,
\]
and
\[
g(u):=((-\mathcal L_1(u)+2\mathcal L_2(u)) (1+u^{1/2})^2.
\]
One can see that $f$ and $g$ can be written as $h(\log u)$ and $l(\log u)$, where
\[
h(x)=\frac{2 \text{Csch} \frac{x}{2} ^2 \left(3 x \text{Cosh}[x]+6 \text{Sinh} \frac{x}{2} -3 \text{Sinh}[x]-2 \text{Sinh} \frac{3 x}{2} \right)}{3 x^2},
\]
and
\[
l(x)=\frac{2+e^x (-2+x)+x}{\left(-1+e^{x/2}\right)^3 \left(1+e^{x/2}\right)}.
\]
The Taylor series of functions $h$ and $l$ up to order $9$ is given by
\[
h(x)=-\frac{x}{15}+\frac{x^2}{30}-\frac{5 x^3}{504}+\frac{11 x^4}{5040}-\frac{19 x^5}{67200}+\frac{x^6}{172800}+\frac{5 x^7}{2128896}+\frac{29 x^8}{106444800}+O[x]^{9}
\]
\[
l(x)=\frac{2}{3}-\frac{x}{3}+\frac{7 x^2}{120}-\frac{x^3}{720}-\frac{29 x^4}{40320}+\frac{x^5}{80640}+\frac{79 x^6}{4838400}-\frac{x^7}{9676800}-\frac{697 x^8}{1703116800}+O[x]^{9}
\]
\vskip .2in
Using the identities
\[
k^{-1}\delta_i(k)=2\frac{\Delta^{1/2}-1}{\Delta}\delta_i(\log k),\quad \delta_i(k)k^{-1}=-2\frac{\Delta^{-1/2}-1}{\Delta}\delta_i(\log k)
\]
and the fact that for an entire function $F$
\[
\tau_0(aF(\log \Delta)(b))=\tau_0(F(-\log \Delta )(a)b),
\]
one can find that
\[
\varphi(f(\Delta)(\delta_i(k))\delta_j(k))=\tau_0(K(\log k)(\delta_i(k))\delta_j(k))
\]
and
\[
\varphi(g(\Delta)(\delta_i(k))\delta_j(k))=\tau_0(K'(\log k)(\delta_i(k))\delta_j(k)),
\]
where $\varphi(x)=\tau_0(xk^{-2})$ and
\[
K(x)=4x^{-2}\left(e^{x/2}-1\right)^2h(x),\qquad L(x)=4x^{-2}\left(e^{x/2}-1\right)^2l(x).
\]

We would like to find the value of the
\[
B_2(\Delta'_1)=\pi k^{-2} h^{ij} \varphi(f(\Delta)(\delta_i(k))\delta_j(k))+\pi k^{-2} \epsilon^{ij} \varphi(g(\Delta)(\delta_i(k))\delta_j(k)).
\]
\newpage
The fact that $K$ is an odd function implies that
\[
\varphi(f(\Delta)(\delta_i(k))\delta_j(k))=-\varphi(f(\Delta)(\delta_j(k))\delta_i(k)).
\]
Hence
 \[\varphi(f(\Delta)(\delta_1(k))\delta_1(k))=\varphi(f(\Delta)(\delta_2(k))\delta_2(k))=0,\qquad
\]
and
\[
\varphi(f(\Delta)(\delta_1(k))\delta_2(k))=-\varphi(f(\Delta)(\delta_2(k))\delta_1(k)).
\]

On the other hand, for the second term, the fact that $\epsilon^{12}=-1,\,\epsilon^{21}=1$ together with $L$ being an even function implies that
\[
\pi k^{-2} \epsilon^{ij} \varphi(g(\Delta)(\delta_i(k))\delta_j(k))=0.
\]
Indeed
\begin{align*}
\varphi(g(\Delta)(\delta_i(k))\delta_j(k))=\varphi(g(\Delta)(\delta_j(k))\delta_i(k)).
\end{align*}
Now
\begin{align*}
\pi k^{-2} \epsilon^{ij} \varphi(g(\Delta)(\delta_i(k))\delta_j(k))=-\pi k^{-2}  \varphi(g(\Delta)(\delta_1(k))\delta_2(k))+\pi k^{-2} \varphi(g(\Delta)(\delta_2(k))\delta_1(k))=0.
\end{align*}
This completes the proof of the following:
\begin{theorem}
For any $w\in \C$, any  irrational number $\theta$, and any  positive invertible element $k\in A^{\infty}_{\theta}$, the value of $B_2(\Delta'_0)-B_2(\Delta'_1)$, where $\Delta'_0\sim k(-\overline w+\del^*)(- w+\del) k$ and $\Delta'_1\sim(-\overline w+\del^*)k^2(- w+\del)$,  is given by
\[
B_2(\Delta'_0)-B_2(\Delta'_1)=0
\]

\end{theorem}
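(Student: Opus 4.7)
My plan is to compute $B_2(\Delta'_0)$ and $B_2(\Delta'_1)$ separately, using Connes' pseudodifferential calculus and the symbol lemmas already established, and to show that each one equals the corresponding expression obtained in the $w=0$ Gauss--Bonnet computation of \cite{FK, FK2}, whence $B_2(\Delta'_0) - B_2(\Delta'_1) = 0$. I expect the role of the holomorphic parameter $w$ to be purely auxiliary: it enters the symbols but is annihilated by the radial momentum integrations.

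First I would form the parametrix symbol $\sigma(B_\lambda) = b_0 + b_1 + b_2 + \cdots$ for $(\Delta'_i - \lambda)^{-1}$ via the standard recursion, with $b_0 = (a_2 - \lambda)^{-1}$ and $b_2$ given by the formula recalled just above. After applying $\tau_0$ and integrating over $\xi$ by the polar substitution (\ref{GC formula}), the $b_2$ contribution naturally splits into a \emph{classical} part, already present for $w=0$, and a \emph{new} part linear or bilinear in $w_1, w_2$. The second step is to verify by direct one-variable $r$-integration that every new term contributes zero: this reduces to checking three families of integrals, namely the $|w|^2$-type, the $w_1$-linear type, and the $(w_1 \tau_1 - w_2 \tau_2)$-type, each of which already appears in the explicit lists displayed above (and analogously, with complex-weighted coefficients, for $\Delta'_1$). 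This neutralization of the holomorphic parameter is the first key observation.

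Once $w$ has been eliminated, the remaining $k$-dependent terms in $B_2(\Delta'_1)$ reorganize, after repeated use of the Connes--Tretkoff rearrangement lemma moving $b_0$-factors past powers of $k$, into a pair of modular bilinear forms
\[
B_2(\Delta'_1) \;=\; \pi k^{-2} h^{ij}\,\varphi\!\bigl(f(\Delta)(\delta_i(k))\,\delta_j(k)\bigr) \;+\; \pi k^{-2}\epsilon^{ij}\,\varphi\!\bigl(g(\Delta)(\delta_i(k))\,\delta_j(k)\bigr),
\]
arising from the decomposition $g^{ij} = h^{ij} + \epsilon^{ij}$ of the complex-structure tensor into its symmetric and antisymmetric parts. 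The modular functions $f, g$ built from $\mathcal{L}_1, \mathcal{L}_2, \mathcal{L}_3$ yield, after conjugation by $\varphi(\cdot) = \tau_0(\cdot\, k^{-2})$, kernels $K$ and $L$ of definite parity ($K$ odd, $L$ even). A parity mismatch then kills both terms individually: an odd-in-$(i,j)$ form paired with symmetric $h^{ij}$ vanishes, and an even-in-$(i,j)$ form paired with antisymmetric $\epsilon^{ij}$ vanishes. The same argument, applied to $\Delta'_0$, forces $B_2(\Delta'_0) = 0$, and the theorem follows.

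The main obstacle will be the bookkeeping of the third step: the pseudodifferential expansion produces a forest of terms that must be coaxed into the modular form above by repeated invocations of the rearrangement lemma, and the extraction of the correct functions $f, g$ — in particular the precise coefficient combinations $-\mathcal{L}_1 + 2\mathcal{L}_2 - 2\mathcal{L}_3$ and $-\mathcal{L}_1 + 2\mathcal{L}_2$ together with the factors $(1+\Delta^{1/2})^2$ — is what ensures that the associated kernels $K$ and $L$ have the required parities. Once those kernels are in place the cancellation is immediate, but getting the signs and exponents right is where careful computation is essential.
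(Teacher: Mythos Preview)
Your proposal is correct and follows essentially the same route as the paper: the $w$-dependent contributions are killed by the three families of radial integrals you describe, the surviving $b_2$ of $\Delta'_1$ is repackaged via the Connes--Tretkoff rearrangement into the bilinear form $\pi k^{-2}h^{ij}\varphi(f(\Delta)(\delta_i k)\,\delta_j k)+\pi k^{-2}\epsilon^{ij}\varphi(g(\Delta)(\delta_i k)\,\delta_j k)$ with exactly the combinations $-\mathcal L_1+2\mathcal L_2-2\mathcal L_3$ and $-\mathcal L_1+2\mathcal L_2$ you anticipate, and the parity of the resulting kernels $K$ (odd) and $L$ (even) annihilates the symmetric and antisymmetric pairings respectively. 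The only cosmetic difference is that for $\Delta'_0$ the paper does not rerun the parity argument but simply observes that, once $w$ is gone, the remaining terms coincide verbatim with those of \cite{FK}, where the vanishing was already established.
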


\section{Computing the Riemann-Roch density}
In this section we find the analogue of the formula (\ref{index}), for the noncommutative two torus equipped with a holomorphic structure on its trivial bundle. Seeking a formula for the Riemann-Roch densities
$
\mathcal R-2i \mathcal R_E$ and $ \mathcal R^{\gamma}-2i\mathcal R_E^{\gamma}
$
on $(0,0)$ and $(1,0)$ sections,
we will need to work out the zeta functional $\zeta_i(a,s)=\text{Trace}(a\Delta_i^{-s})$ for $i=0,1$, \cite{C7,C5}. One has
\[
\text{Trace}(a\Delta_0^{-s})\big|_{s=0}+\text{Trace}(aP)=\tau_0(a( \mathcal R-2 i\mathcal R_E)),
\]
and
\[
\text{Trace}(a\Delta_1^{-s})\big|_{s=0}+\text{Trace}(aP)=\tau_0(a( \mathcal R^{\gamma}-2 i\mathcal R_E^{\gamma})),
\]
where $P$ is the projection on the kernel of $\Delta_i$ for $i=0,1$.
We would like to  show that $\mathcal R_E=\mathcal R_E^{\gamma}=0$,  $\mathcal R=R$ and $\mathcal R^{\gamma}=R^{\gamma}$, where $R$ and $R^{\gamma}$ are the scalar curvature and chiral scalar curvature of the noncommutative two torus introduced in \cite{CM1, FK2}. We will give the proof for $(1,0)$-sections, i.e. $\mathcal R^{\gamma}=R^{\gamma}$ and $\mathcal R_E^{\gamma}=0$, and the proofs for (0,0)-sections will be similar.
\begin{theorem}
The formulae for the Riemann-Roch densities are given by $\mathcal R_E=\mathcal R_E^{\gamma}=0$,  $\mathcal R=R$ and $\mathcal R^{\gamma}=R^{\gamma}$, where the (chiral) scalar curvatures $R$ and $R^{\gamma}$ for the noncommutative two torus, up to an overall factor of $-\pi/\tau_2$,  are given by \cite{CM1,FK2}
\begin{align*}
R&=R_1(\log \Delta)\Big (h^{ij}\delta_i\delta_j(\log k)\Big)+R_2(\log \Delta_1,\log \Delta_2)\Big(h^{ij}\delta_i(\log k)\delta_j(\log k)\Big)\\
&-iW(\log \Delta_1,\log \Delta_2)\Big(-i\epsilon^{ij}\delta_i(\log k)\delta_j(\log k)\Big)
\end{align*}
where
\begin{align*}
&R_1(x)=\frac{\frac{1}{2}-\frac{\sinh (x/2)}{x}}{\sinh ^2(x/4)},\\
&R_2(s,t)=-(1+\cosh ((s+t)/2))\times\\
&\frac{(s+t)(-t\cosh s+s\cosh t)-(s-t)(s+t+\sinh s+\sinh t -\sinh(s+t))}{st(s+t)\sinh (s/2)\sinh (t/2)\sinh ^2((s+t)/2)},\\
&W=\frac{(-s-t+t\cosh s+s\cosh t+\sinh s+\sinh t -\sinh(s+t)}{st(s+t)\sinh (s/2)\sinh (t/2)\sinh ((s+t)/2)}
\end{align*}
and
\begin{align*}
R^{\gamma}&=R^{\gamma}_1(\log \Delta)\Big (h^{ij}\delta_i\delta_j(\log k)\Big)+R^{\gamma}_2(\log \Delta_1,\log \Delta_2)\Big(h^{ij}\delta_i(\log k)\delta_j(\log k)\Big)\\
&-iW(\log \Delta_1,\log \Delta_2)\Big(-i\epsilon^{ij}\delta_i(\log k)\delta_j(\log k)\Big)
\end{align*}
where
\begin{align*}
&R^{\gamma}_1(x)=\frac{\frac{1}{2}+\frac{\sinh (x/2)}{x}}{\cosh ^2(x/4)},\\
&R^{\gamma}_2(s,t)=-(1-\cosh ((s+t)/2))\times\\
&\frac{(s+t)(-t\cosh s+s\cosh t)-(s-t)(s+t+\sinh s+\sinh t -\sinh(s+t))}{st(s+t)\sinh (s/2)\sinh (t/2)\sinh ^2((s+t)/2)},\\
&W=\frac{(-s-t+t\cosh s+s\cosh t+\sinh s+\sinh t -\sinh(s+t)}{st(s+t)\sinh (s/2)\sinh (t/2)\sinh ((s+t)/2)}.
\end{align*}
\end{theorem}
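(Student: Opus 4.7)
The plan is to read off the densities $\mathcal R^\gamma, \mathcal R_E^\gamma$ (and analogously $\mathcal R, \mathcal R_E$) from the same pseudodifferential computation performed in Section 4, but now retaining the spatial dependence that was collapsed by the global trace. By the standard relationship between the constant term of the zeta function and the $b_2$ coefficient of the heat expansion,
\begin{equation*}
\text{Trace}(a\Delta_i^{-s})\big|_{s=0} + \text{Trace}(aP) = \int \tau_0\bigl(a \cdot b_2^i(\xi,-1)\bigr)\,d\xi,
\end{equation*}
the task reduces to identifying the integrated symbol $\int b_2^i(\xi,-1)\,d\xi$ as an element of $\Ats$ modulo the kernel of all functionals $a \mapsto \tau_0(a \cdot -)$.

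First I would prove $\mathcal R_E = \mathcal R_E^\gamma = 0$. The key observation is that in Sections~4.1 and~4.2 the dependence of $b_2^i$ on the holomorphic parameter $w$ was isolated into three specific $r$-integrals (the three displayed equations that each evaluate to $0$), and these integrals vanish \emph{before} $\tau_0$ is applied, i.e.\ at the level of the operator-valued symbol itself. It follows that $\int \tau_0(a \cdot [w\text{-dependent part of }b_2^i])\,d\xi = 0$ for every $a\in\Ats$, which by the definitions set up at the start of Section~5 forces $\mathcal R_E = \mathcal R_E^\gamma = 0$.

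Next I would establish $\mathcal R = R$ and $\mathcal R^\gamma = R^\gamma$. Once the $w$-dependent terms are removed, the operators $\Delta'_0 \sim k\,\del^*\del\, k$ and $\Delta'_1 \sim \del^* k^2 \del$ coincide with the conformally perturbed Dolbeault Laplacians treated in \cite{CM1,FK2}, whose heat densities produce exactly the scalar curvature $R$ and chiral scalar curvature $R^\gamma$ quoted in the statement. Feeding these into the zeta formula above yields $\mathcal R = R$ and $\mathcal R^\gamma = R^\gamma$. Recasting the output in terms of the modular one- and two-variable functions $R_1, R_2, R_1^\gamma, R_2^\gamma, W$ uses the identities $k^{-1}\delta_i(k) = 2\,\frac{\Delta^{1/2}-1}{\Delta}\,\delta_i(\log k)$ and $\delta_i(k)k^{-1} = -2\,\frac{\Delta^{-1/2}-1}{\Delta}\,\delta_i(\log k)$, together with the twisted-trace identity $\varphi(x F(\log \Delta)(y)) = \tau_0\bigl(F(-\log\Delta)(x)\,y\bigr)$ already exploited in Section~4.

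The main obstacle I expect is purely bookkeeping rather than new conceptual work: since extracting the density (rather than its global trace) does not permit the cyclic $\tau_0$-reductions that simplified $B_2$ in Section~4, one must track many distinct noncommutative arrangements of $k$'s, $\delta_i(k)$'s and modular operators and check that their $\varphi$-pairings against arbitrary $a$ reproduce exactly the $h^{ij}$ and $\epsilon^{ij}$ combinations encoded by $R_1, R_2, R_1^\gamma, R_2^\gamma, W$. The genuinely new content beyond Section~4 is the observation above about the vanishing of the $w$-contributions; the remainder is a careful reshuffling that matches the outputs of the scalar-curvature computations of \cite{CM1,FK2} verbatim.
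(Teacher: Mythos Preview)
Your overall strategy matches the paper's: show that the $w$-dependent contribution to the integrated symbol $\int b_2^i(\xi,-1)\,d\xi$ vanishes, then observe that the remaining ($w$-independent) part is literally the heat density computed in \cite{CM1,FK2}, whence $\mathcal R=R$ and $\mathcal R^{\gamma}=R^{\gamma}$. That second step is fine and is exactly what the paper does.

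The gap is in your first step. You assert that the three displayed $r$-integrals in Sections~4.1 and~4.2 ``vanish before $\tau_0$ is applied, i.e.\ at the level of the operator-valued symbol itself,'' and hence that $\mathcal R_E=\mathcal R_E^{\gamma}=0$ is already established there. This is not the case. Recall that the parametrix recursion gives every summand of $b_2$ with a factor $b_0$ on the far right (e.g.\ $-b_0 a_0 b_0$ contributes $2w_1\, b_0\,k\,\delta_1(k)\,b_0$). The terms actually listed in Sections~4.1--4.2 carry no such right-hand $b_0$ (e.g.\ $4\pi w_1\, b_0^{2}\,k\,\delta_1(k)$); they were obtained by cycling the trailing $b_0$ to the left, which is legitimate only under $\tau_0$. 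Consequently the vanishing of those three $r$-integrals certifies only that the \emph{trace} of the $w$-contribution is zero, not that the density $\int b_2^i\,d\xi\in\Ats$ is zero. That weaker statement does not give $\tau_0\bigl(a\cdot[w\text{-part}]\bigr)=0$ for arbitrary $a$, which is what the zeta-function definition of $\mathcal R_E^{\gamma}$ requires.

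The paper's proof of the theorem performs precisely the computation you are missing: it writes out the \emph{uncycled} $w$-dependent terms of $b_2^1$ (note the $b_0$'s on both sides, e.g.\ $2\pi w_1\, b_0\,k\,\delta_1(k)\,b_0$, $-2\pi r^2 w_1\, b_0\,\delta_1(k)\,b_0^{2}\,k^{3}$, etc.), integrates over $r$ using the modified-logarithm operators $\mathcal D_m$ and the Connes--Tretkoff lemma, and shows the resulting combination collapses to
\[
\Big((w-w_1-iw_2)\mathcal D_1 + (w\tau-(w\tau)_1-i(w\tau)_2)\mathcal D_1\Big)(1+\Delta^{1/2})=0.
\]
Only after this density-level cancellation is established do the identifications $\mathcal R_E^{\gamma}=0$ and $\mathcal R^{\gamma}=R^{\gamma}$ follow.
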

\begin{proof}
The terms of $b_2^1(\xi,-1)$ can be divided into two parts: those that do not contain $w$ which are exactly as those that appeared in \cite {FK2}, and new terms that contain $w$. One can check that these new terms with the parameter $w$ involved, up to an overall factor of $r/\tau_2$, are the following;

\begin{align*}
&
+2 \pi  w_1 b_0k\delta _1(k)b_0+2 i \pi  w_2 b_0k\delta _1(k)b_0+2 \pi  w_1 \tau _1 b_0k\delta _2(k)b_0+2 i \pi  w_2 \tau _1 b_0k\delta _2(k)b_0
\\&
+2 i \pi  w_1 \tau _2 b_0k\delta _2(k)b_0-2 \pi  w_2 \tau _2 b_0k\delta _2(k)b_0+2 \pi  w_1 b_0\delta _1(k)b_0k+2 i \pi  w_2 b_0\delta _1(k)b_0k
\\&
-2 \pi  r^2 w_1 b_0\delta _1(k)b_0^2k^3-2 i \pi  r^2 w_2 b_0\delta _1(k)b_0^2k^3+2 \pi  w_1 \tau _1 b_0\delta _2(k)b_0k+2 i \pi  w_2 \tau _1 b_0\delta _2(k)b_0k
\\&
+2 i \pi  w_1 \tau _2 b_0\delta _2(k)b_0k-2 \pi  w_2 \tau _2 b_0\delta _2(k)b_0k-2 \pi  r^2 w_1 \tau _1 b_0\delta _2(k)b_0^2k^3-2 i \pi  r^2 w_2 \tau _1 b_0\delta _2(k)b_0^2k^3
\\&
-2 i \pi  r^2 w_1 \tau _2 b_0\delta _2(k)b_0^2k^3+2 \pi  r^2 w_2 \tau _2 b_0\delta _2(k)b_0^2k^3-10 \pi  r^2 w_1 b_0^2k^3\delta _1(k)b_0-2 i \pi  r^2 w_2 b_0^2k^3\delta _1(k)b_0
\\
&
-10 \pi  r^2 w_1 \tau _1 b_0^2k^3\delta _2(k)b_0-2 i \pi  r^2 w_2 \tau _1 b_0^2k^3\delta _2(k)b_0-2 i \pi  r^2 w_1 \tau _2 b_0^2k^3\delta _2(k)b_0
\\&
+10 \pi  r^2 w_2 \tau _2 b_0^2k^3\delta _2(k)b_0+8 \pi  r^4 w_1 b_0^3k^5\delta _1(k)b_0+8 \pi  r^4 w_1 \tau _1 b_0^3k^5\delta _2(k)b_0-
8 \pi  r^4 w_2 \tau _2 b_0^3k^5\delta _2(k)b_0
\\&
-2 \pi  r^2 w_1 b_0k\delta _1(k)b_0^2k^2-2 i \pi  r^2 w_2 b_0k\delta _1(k)b_0^2k^2-2 \pi  r^2 w_1 \tau _1 b_0k\delta _2(k)b_0^2k^2-2 i \pi  r^2 w_2 \tau _1 b_0k\delta _2(k)b_0^2k^2
\\&
-2 i \pi  r^2 w_1 \tau _2 b_0k\delta _2(k)b_0^2k^2+2 \pi  r^2 w_2 \tau _2 b_0k\delta _2(k)b_0^2k^2-10 \pi  r^2 w_1 b_0^2k^2\delta _1(k)b_0k-2 i \pi  r^2 w_2 b_0^2k^2\delta _1(k)b_0k
\\&
+4 \pi  r^4 w_1 b_0^2k^2\delta _1(k)b_0^2k^3-10 \pi  r^2 w_1 \tau _1 b_0^2k^2\delta _2(k)b_0k-2 i \pi  r^2 w_2 \tau _1 b_0^2k^2\delta _2(k)b_0k-2 i \pi  r^2 w_1 \tau _2 b_0^2k^2\delta _2(k)b_0k
\\&
+10 \pi  r^2 w_2 \tau _2 b_0^2k^2\delta _2(k)b_0k+4 \pi  r^4 w_1 \tau _1 b_0^2k^2\delta _2(k)b_0^2k^3-4 \pi  r^4 w_2 \tau _2 b_0^2k^2\delta _2(k)b_0^2k^3
\\&
+4 \pi  r^4 w_1 b_0^2k^3\delta _1(k)b_0^2k^2
+4 \pi  r^4 w_1 \tau _1 b_0^2k^3\delta _2(k)b_0^2k^2-4 \pi  r^4 w_2 \tau _2 b_0^2k^3\delta _2(k)b_0^2k^2+8 \pi  r^4 w_1 b_0^3k^4\delta _1(k)b_0k+
\\&
+8 \pi  r^4 w_1 \tau _1 b_0^3k^4\delta _2(k)b_0k-8 \pi  r^4 w_2 \tau _2 b_0^3k^4\delta _2(k)b_0k-2 \pi |w|^2 b_0^2k^2+4 \pi  r^2 w_1^2 b_0^3k^4+4 \pi  r^2 w_2^2 b_0^3k^4.
\end{align*}
After integration by parts, up to overall factor of $\frac{\pi}{\tau_2} k^{-1}$, one can find that this is equal to
\begin{align*}
&\Big(w\mathcal D_0+w\mathcal D_0 \Delta^{1/2}-w\mathcal D_0\Delta^{1/2}+w\mathcal D_1\Delta^{1/2}-5w_1\mathcal D_1-iw_2\mathcal D_1+4 w_1\mathcal D_2-w\mathcal D_0
w\mathcal D_1-
\\&5w_1\mathcal D_1 \Delta^{1/2}-iw_2\mathcal D_1 \Delta^{1/2}+4w_1\mathcal D_1\Delta^{1/2}-4w_1\mathcal D_2\Delta^{1/2}+4w_1\mathcal D_1-4w_1\mathcal D_2+4w_1\mathcal D_2\Delta^{1/2}\Big) (\delta_1(k))+\\
&\Big(w\tau\mathcal D_0+w\tau\mathcal D_0 \Delta^{1/2}-w\tau\mathcal D_0\Delta^{1/2}+w\tau\mathcal D_1\Delta^{1/2}-5(w\tau)_1\mathcal D_1-i(w\tau)_2\mathcal D_1+4 (w\tau)_1\mathcal D_2-w\tau\mathcal D_0
w\mathcal D_1-
\\&5(w\tau)_1\mathcal D_1 \Delta^{1/2}-i(w\tau)_2\mathcal D_1 \Delta^{1/2}+4(w\tau)_1\mathcal D_1\Delta^{1/2}-4(w\tau)_1\mathcal D_2\Delta^{1/2}+4(w\tau)_1\mathcal D_1-4(w\tau)_1\mathcal D_2+\\
&4(w\tau)_1\mathcal D_2\Delta^{1/2}\Big) (\delta_2(k)),
\end{align*}
where
\[
(w\tau)_1=Re(w\tau),\qquad(w\tau)_2=Im (w\tau).
\]
One can easily find that the above expression vanishes. Indeed,  it will simplify to
\[\Big((w\mathcal D_1-w_1\mathcal D_1-iw_2\mathcal D_1)+(w\tau\mathcal D_1-(w\tau)_1\mathcal D_1-i(w\tau)_2\mathcal D_1)\Big)(1+\Delta^{1/2})=0.
\]

\end{proof}

\section{The nontrivial bundle $\SR$}
Let $\mathcal S(\mathbb R)$ denote the space of rapidly decreasing schwartz class functions on $\mathbb R$. It is a right $\At^{\infty}$-module \cite{C0,C1,C5}.
We fix a Powers-Rieffel projection $e$ such that $e\At^{\infty}\simeq \SR$ as right $\At^{\infty}$-modules. Then $e\delta_1$ and $e\delta_2$ will define a Grassmannian connection on $e\At^{\infty}$ and one has \cite{C0}
\[
2\pi i \tau_0(e[\delta_1(e),\delta_2(e)])=1.
\]
Note that the difference between this formula and the one in \cite{C0} is because of our convention on derivations $\delta_i$. One can adapt Connes' pseudodifferential calculus for this case. In fact, one has $End_{\Ats}(e\Ats)=e\Ats e$.  Suffices to say that symbols are smooth maps $
\rho :\mathbb R^2\to e\Ats e$ with appropriate growth condition. For example for any  pseudodifferential operator $P_{\rho}$ on $\At^{\infty}$, one can see that $eP_{\rho} e$ defines a pseudodifferential operator on $e\Ats$.

We introduce the twisted Dolbeault operator $\del_E:e\At^{\infty}\to e\At^{\infty}$ as
\[
\del_E=e(\delta_1+i\delta_2)=e\del .
\]
The hermitian structure of $e\At^{\infty}$ is given by 
\[
(\xi, \eta)=\eta^*\xi
\]
and the connection $\nabla_i=e\delta_i$, $i=1,2$ is compatible with this hermitian structure, i.e.
\[
(\nabla_i \xi, \eta)-(\xi,\nabla_i \eta)=\delta_i(\xi,\eta), \quad i=1,2.
\]
This implies that
\[
\del_E^*=e(\delta_1-i\delta_2)=e\del^*.
\]
 One can define the Laplacian on (0,0) sections as 
\[
\Delta^0_E=\del_E^*\del_E.
\]
Therfore
\[
\Delta^0_E=e\del^* e\del =e\delta_1^2+e\delta_2^2+ie[\delta_1(e),\delta_2(e)].
\]
The symbol of this operator is given by
\[
\sigma(\Delta^0_E)=a_0+a_1+a_2,
\]
where
\begin{align*}
&a_2=e\xi_1^2+e\xi_2^2,\qquad a_1=0,\qquad a_0=ie[\delta_1(e),\delta_2(e)].
\end{align*}
The Laplacian on  (1,0) sections is given by $\Delta^1_E=\del_E\del_E^*$,
and its symbol is
\[
\sigma(\Delta^1_E)=a_0+a_1+a_2,
\]
where
\begin{align*}
&a_2=e\xi_1^2+e\xi_2^2,\qquad a_1=0,\qquad a_0=-ie[\delta_1(e),\delta_2(e)].
\end{align*}
It is not difficult to find the $b_2$ terms of $\Delta^0$ and $\Delta^1$ in this case. For example, for $b_2(\Delta^1)$, after the polar change of coordinate $(\xi_1,\xi_2)\to (r,\theta)$ and integration over $\theta$ one  has: 
\begin{align*}
&2 \pi  r^2 e b_0^2\delta _1^2(e)b_0+2 \pi  r^2 e b_0^2 \delta _2^2(e) b_0-4 \pi  r^4 e b_0^3 \delta _1^2(e) b_0-4 \pi  r^4 e b_0^3 \delta _2^2(e) b_0+
\\
&2 i \pi  b_0 e \delta _1 (e) \delta _2 (e) b_0-2 i \pi  b_0 e \delta _2 (e) \delta _1 (e) b_0-8 \pi  r^4 e b_0^2 \delta _1 (e) b_0 \delta _1 (e) b_0-8 \pi  r^4 e b_0^2 \delta _2 (e) b_0 \delta _2 (e) b_0+
\\
&8 \pi  r^6 e b_0^3 \delta _1 (e) b_0 \delta _1 (e) b_0+8 \pi  r^6 e b_0^3 \delta _2 (e) b_0 \delta _2 (e) b_0+4 \pi  r^6 e b_0^2 \delta _1 (e) e b_0^2 \delta _1 (e) b_0+4 \pi  r^6 e b_0^2 \delta _2 (e) e b_0^2 \delta _2 (e) b_0
\end{align*}
With a similar computation one gets
\[
B_2(\Delta^0_{E})-B_2(\Delta^1_{E})=-2\pi i \tau_0(e[\delta _1(e),\delta _2(e)])
\]

The map $W: e\Ats\to( e\Ats)_{\varphi}$, given by $W(s)=sk$ is an  isometry. Here $( e\Ats)_{\varphi}$ is the completion of  $e\Ats$ with respect to the inner product $(\xi,\eta)_{\varphi}=\tau_0 (\eta ^*\xi  k^{-2})$. One can see that
\[
\del_{E,\varphi}\circ W=\del_E \circ R_k,
\]
where the operator $\del_{E,\varphi}$ is $\del_E$ but considered on $( e\Ats)_{\varphi}$. This yields that $W^* \Delta_{E,\varphi} W=R_k^*\Delta_E R_k$, that is $\Delta_{E,\varphi}$ is unitarily equivalent to $R_k^*\Delta_E R_k$. We employ a variational argument as in \cite{CM1}, Theorem 2.2 (cf. also \cite{CCT}). Letting
\[
\triangle^s_E=e^{sh/2}\triangle_E e^{sh/2},
\]
one has 
\[
\frac{d}{ds}\triangle^s_E=\frac{1}{2}(h \triangle^s_E+\triangle^s_E h).
\] 
We need to bear in mind that all multiplications are considered as right multiplication operators. Then
\[
\frac{d}{ds}\text{Tr}e^{-t\triangle^s_E}=-t\text{Tr}(h\triangle^s_E e^{-t\triangle^s_E})=t\frac{d}{dt}\text{Tr}(he^{-t\triangle^s_E}).
\]
Assuming that
\[
\text{Tr}(e^{-t\triangle^s_E})\sim t^{-1}\sum_{n=0}^{\infty}a_n(\triangle^s_E) t^{n/2},
\] 
and
\[
\text{Tr}(he^{-t\triangle^s_E})\sim t^{-1}\sum_{n=0}^{\infty}a_n(h,\triangle^s_E) t^{n/2},
\]
one has
\[
\frac{d}{ds}a_j(\triangle^s_E)=(\frac{j}{2}-1)a_j(h,\triangle^s_E),
\]
and hence
\[
\frac{d}{ds}a_2(\triangle^s_E)=0.
\]
This shows that the $B_2$ term of $\Delta_{E,\varphi}$ is independent of $\varphi$ and therefore 
\[
B_2(\Delta^0_{E,\varphi})-B_2(\Delta^1_{E,\varphi})=-2\pi i \tau_0(e[\delta _1(e),\delta _2(e)])=-1,
\]
which proves the following theorem.
\begin{theorem}
With the above notation, the index of the operator $\del_{E,\varphi}$ is independent of the conformal class of the metric and
\[
\text{Index}\,(\del_{E,\varphi})=-1.
\]
\end{theorem}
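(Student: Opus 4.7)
The plan is to combine the McKean--Singer formula for the twisted Dolbeault operator with a conformal invariance argument so that the computation reduces to the $\varphi=\tau_0$ case, where the index can be read off from an explicit $b_2$ calculation. Concretely, by McKean--Singer applied to the $\mathbb{Z}/2$-graded complex with grading operator $\gamma$, the index of $\del_{E,\varphi}$ equals the constant term of the asymptotic expansion of $\operatorname{Tr}(e^{-t\triangle_{E,\varphi}^0})-\operatorname{Tr}(e^{-t\triangle_{E,\varphi}^1})$, namely $B_2(\triangle_{E,\varphi}^0)-B_2(\triangle_{E,\varphi}^1)$. So the goal splits into two independent pieces: first prove the $B_2$ difference does not depend on the Weyl factor $k$ (equivalently, on $h$), then compute it in the flat case.

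For conformal invariance, I would use the isometry $W(s)=sk$ described above, which gives the unitary equivalence $\triangle_{E,\varphi}\cong R_k^*\triangle_E R_k$; setting $k=e^{-h/2}$ this is the endpoint of the one-parameter family $\triangle_E^s=e^{sh/2}\triangle_E e^{sh/2}$. Following the variational strategy of Connes--Moscovici \cite{CM1} (also used in \cite{CCT}), I would differentiate, obtaining
\[
\tfrac{d}{ds}\triangle_E^s = \tfrac{1}{2}(h\triangle_E^s+\triangle_E^s h),
\]
and hence $\tfrac{d}{ds}\operatorname{Tr}e^{-t\triangle_E^s}=t\tfrac{d}{dt}\operatorname{Tr}(he^{-t\triangle_E^s})$. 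Matching the coefficients in the heat expansions
\[
\operatorname{Tr}(e^{-t\triangle_E^s})\sim t^{-1}\sum a_n(\triangle_E^s)t^{n/2},\qquad \operatorname{Tr}(he^{-t\triangle_E^s})\sim t^{-1}\sum a_n(h,\triangle_E^s)t^{n/2},
\]
one gets $\tfrac{d}{ds}a_j(\triangle_E^s)=(\tfrac{j}{2}-1)a_j(h,\triangle_E^s)$, and the factor $\tfrac{j}{2}-1$ vanishes precisely when $j=2$. Thus $B_2(\triangle_E^s)$ is independent of $s$, and in particular $B_2(\triangle_{E,\varphi}^i)=B_2(\triangle_E^i)$ for $i=0,1$.

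It then remains to evaluate $B_2(\triangle_E^0)-B_2(\triangle_E^1)$ in the flat (i.e.\ $k=1$) case. Here the symbols were already computed in the paragraphs preceding the theorem: both Laplacians have the same leading symbols $a_2=e(\xi_1^2+\xi_2^2)$ and vanishing $a_1$, while $a_0=\pm ie[\delta_1(e),\delta_2(e)]$. Running Connes' pseudodifferential calculus on $eA_\theta^\infty e$ as indicated, only the $a_0$ terms survive after integration, and the standard $b_2$-formula yields
\[
B_2(\triangle_E^0)-B_2(\triangle_E^1) = -2\pi i\,\tau_0\bigl(e[\delta_1(e),\delta_2(e)]\bigr).
\]
Plugging in Connes' identity $2\pi i\,\tau_0(e[\delta_1(e),\delta_2(e)])=1$ for the Powers--Rieffel projection gives $-1$, which completes the proof.

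I expect the routine but delicate step to be the variational bookkeeping: one must check that the heat expansion for $\triangle_E^s$ really is term-by-term smooth in $s$ (so that differentiation commutes with the asymptotic expansion) and that the companion expansion with the $h$-insertion has exactly the structure claimed. Once that is granted, the coefficient relation $\tfrac{d}{ds}a_j=(\tfrac{j}{2}-1)a_j(h,\cdot)$ is formal, and the vanishing at $j=2$ is immediate. The flat computation itself is short because all the $k$-derivative terms that made Section 4 long are absent.
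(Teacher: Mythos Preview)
Your proposal is correct and follows essentially the same route as the paper: the paper likewise uses the isometry $W$ to reduce to $R_k^*\Delta_E R_k$, runs the identical variational argument $\triangle_E^s=e^{sh/2}\triangle_E e^{sh/2}$ from \cite{CM1,CCT} to obtain $\tfrac{d}{ds}a_j=(\tfrac{j}{2}-1)a_j(h,\triangle_E^s)$ and hence conformal invariance of $B_2$, and then evaluates the flat case via the symbol computation to get $-2\pi i\,\tau_0(e[\delta_1(e),\delta_2(e)])=-1$. The only cosmetic difference is the order of presentation (the paper does the flat $b_2$ computation first, then the variation) and your explicit identification $k=e^{-h/2}$, which in the paper's conventions should read $k=e^{h/2}$; this sign is immaterial to the invariance argument.
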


$^1$ \emph{E-mail}: masoud@uwo.ca\\
$^2$ \emph{E-mail}: amotadel@uwo.ca

\end{document}